\newtheorem{theorem}{Theorem}
\newtheorem{lemma}{Lemma}
\newtheorem{corollary}{Corollary}
\theoremstyle{definition}
\newtheorem{definition}{Definition}
\theoremstyle{remark}
\numberwithin{equation}{section}
\begin{document}
\title[Point values and boundary values of analytic functions]{On distributional point values and boundary values of analytic functions}
\author[R. Estrada]{Ricardo Estrada}
\address{Mathematics Department\\
Louisiana State University\\
Baton Rouge, LA 70803, USA}
\email{restrada@math.lsu.edu}
\author[J. Vindas]{Jasson Vindas}
\address{Department of Mathematics, Ghent University, Krijgslaan 281 Gebouw S22, B 9000 Gent, Belgium}
\email{jvindas@cage.Ugent.be}
\subjclass[2010]{Primary 	30E25, 46F20. Secondary 40E05, 46F10}
\keywords{Boundary values of analytic functions; \L ojasiewicz point values; distributions; Tauberian theorems}
\begin{abstract}
We give the following version of Fatou's theorem for distributions that are boundary values of analytic functions. We
prove that if $f\in\mathcal{D}^{\prime}\left(  a,b\right)  $ is the
distributional limit of the analytic function $F$ defined in a region of the
form $\left(a,b\right)  \times\left(  0,R\right)  ,$ if $\ $the\ one sided
distributional limit exists, $f\left(  x_{0}+0\right)  =\gamma,$ and if $f$ is
distributionally bounded at $x=x_{0},$ then the \L ojasiewicz point value exists, $f\left(  x_{0}\right)  =\gamma$ distributionally, and in particular $F(z)\to \gamma$ as $z\to x_{0}$ in a non-tangential fashion.

\end{abstract}
\maketitle

\section{Introduction\label{Intro}}

The study of boundary values of analytic functions is an important subject in mathematics. In particular, it plays a vital role in the understanding of generalized functions \cite{beltrami,bremermann,c-k-p}. As well known, the behavior of an analytic function at the boundary points is intimately connected with the pointwise properties of the boundary generalized function \cite{EstradaComVar,estrada-vindasT2010,vindas-estradaT2008,vindas-estrada2008,vladimirov-d-z} and the study of this interplay has often an Abelian-Tauberian character. There is a vast literature on Abelian and Tauberian theorems for distributions (see the monographs \cite{estrada,ML,p-s-v,vladimirov-d-z} and references therein). 
 
In this article we present sufficient conditions for the existence of \L ojasiewicz point values \cite{lojasiewicz} for distributions that are boundary values of analytic functions. The pointwise notions for distributions used in this paper are explained in Section
\ref{Prelim}. The following
result by one of the authors is well known \cite{EstradaComVar}:

\textit{Suppose that }$f\in\mathcal{D}^{\prime}\left(  \mathbb{R}\right)
$\textit{ is the boundary value of a function }$F$, \textit{analytic in the
upper half-plane, that is, }$f\left(  x\right)  =F\left(  x+i0\right)
;$\textit{ if the distributional lateral limits }$f\left(  x_{0}\pm0\right)
=\gamma_{\pm}$ \textit{both exist, then }$\gamma_{+}=\gamma_{-}=\gamma
,$\textit{ and the distributional limit }$f\left(  x_{0}\right)
$\textit{\ exists and equals }$\gamma.$

On the other hand, the results of \cite{EstradaChina} imply that there are
distributions $f\left(  x\right)  =F\left(  x+i0\right)  $ for which one
distributional lateral limit exits but not the other. In Theorem \ref{TeoremaCaso2} we show that the existence of one the distributional lateral limits may be removed from the previous statement if an additional Tauberian-type  condition is assumed, namely, if the distribution is distributionally bounded at the point. We also show that when the distribution $f$ is a bounded function near the point, then the
distributional point value is of order 1. Furthermore, we give a general result of this kind for analytic functions that have distributional
limits on a contour.

As an immediate consequence of our results, we shall obtain the following version of Fatou's theorem \cite{koosis,Pom} for distributions that are boundary values of analytic functions.

\begin{corollary}
Let $F$ be analytic in a rectangular region of
the form $\left(  a,b\right)  \times\left(  0,R\right)  .$ Suppose that $f\left(
x\right)  =\lim_{y\rightarrow0^{+}}F\left(  x+iy\right)  $ in
$\mathcal{D}^{\prime}\left(  a,b\right)$, that $f$ is a bounded function near $x_{0}\in\left(a,b\right)$, and that the following average lateral limit exists
\begin{equation*}
\lim_{x\to x^{+}_{0}}\frac{1}{(x-x_{0})}\int_{x_{0}}^{x} f(t)\:\mathrm{d}t=\gamma\ .
\end{equation*}Then,
\begin{equation*}
\lim_{z\to x_{0}}F(z)=\gamma \ \ \ (\mbox{angularly}).
\end{equation*}
\end{corollary}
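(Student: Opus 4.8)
The plan is to derive the corollary directly from Theorem~\ref{TeoremaCaso2}; the whole task reduces to checking that its two hypotheses hold at $x_{0}$. Concretely, I would verify that (a) $f$ is distributionally bounded at $x_{0}$, and (b) the one sided distributional limit $f(x_{0}+0)=\gamma$ exists. Once both are in place, Theorem~\ref{TeoremaCaso2} yields the \L ojasiewicz point value $f(x_{0})=\gamma$ and, in particular, the angular limit $F(z)\to\gamma$ as $z\to x_{0}$, which is exactly the asserted conclusion.

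Hypothesis (a) is immediate. Since $f$ coincides near $x_{0}$ with a bounded measurable function, say $|f(x)|\le M$ for $x$ in a neighborhood of $x_{0}$, for every test function $\phi$ supported near $0$ we have $|\langle f(x_{0}+\varepsilon x),\phi(x)\rangle|\le M\int|\phi|$, uniformly in $\varepsilon$. Hence the dilates $f(x_{0}+\varepsilon x)$ form a bounded set in $\mathcal{D}^{\prime}$ as $\varepsilon\to0^{+}$, which is precisely distributional boundedness at $x_{0}$.

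The substance of the argument is hypothesis (b), translating the scalar average limit into the one sided distributional limit. First I would restate the hypothesis in terms of the first primitive $F_{1}(x)=\int_{x_{0}}^{x}f(t)\,\mathrm{d}t$: the assumed limit says exactly that
\begin{equation*}
F_{1}(x)=\gamma\,(x-x_{0})+o(x-x_{0}),\qquad x\to x_{0}^{+}.
\end{equation*}
Equivalently, $F_{1}(x_{0}+s)/s\to\gamma$ as $s\to0^{+}$; since $s=\varepsilon x\to0$ uniformly for $x$ in a compact subset of $(0,\infty)$, the quotient $F_{1}(x_{0}+\varepsilon x)/(\varepsilon x)\to\gamma$ uniformly on such compacts. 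Now fix $\phi\in\mathcal{D}(0,\infty)$ with support in $[\alpha,\beta]\subset(0,\infty)$ and integrate by parts, the boundary terms vanishing because $\phi(\alpha)=\phi(\beta)=0$:
\begin{equation*}
\langle f(x_{0}+\varepsilon x),\phi(x)\rangle=\int_{\alpha}^{\beta}f(x_{0}+\varepsilon x)\phi(x)\,\mathrm{d}x=-\int_{\alpha}^{\beta}\frac{F_{1}(x_{0}+\varepsilon x)}{\varepsilon}\,\phi^{\prime}(x)\,\mathrm{d}x.
\end{equation*}
Writing $F_{1}(x_{0}+\varepsilon x)/\varepsilon=x\cdot F_{1}(x_{0}+\varepsilon x)/(\varepsilon x)$ and using the uniform convergence above, the integrand tends to $\gamma x\,\phi^{\prime}(x)$ uniformly on $[\alpha,\beta]$, so passing to the limit and integrating by parts once more gives $\langle f(x_{0}+\varepsilon x),\phi(x)\rangle\to\gamma\int_{0}^{\infty}\phi(x)\,\mathrm{d}x$. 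This is exactly $f(x_{0}+0)=\gamma$ in the distributional sense.

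The step I expect to be the only delicate one is (b), and within it the justification that the rescaled primitive converges uniformly on compact subsets of $(0,\infty)$ rather than merely pointwise; this is what makes the passage to the limit under the integral sign legitimate. It is worth noting that the two hypotheses of the corollary feed the two hypotheses of the theorem separately: the average limit supplies the one sided limit (b), while the pointwise boundedness supplies the Tauberian boundedness (a). After these verifications the conclusion is just an instance of Theorem~\ref{TeoremaCaso2}.
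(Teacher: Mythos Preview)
Your proposal is correct and follows the route the paper intends: the corollary is stated as an immediate consequence of the main results, and deriving it from Theorem~\ref{TeoremaCaso2} by verifying the one-sided distributional limit and distributional boundedness is exactly that. One minor remark: using the paper's $\mu_{x_0}/\partial_{x_0}$ formalism, step~(b) is even shorter---the assumed average limit says $f_1=\mu_{x_0}f$ has ordinary right limit $\gamma$, hence $f_1(x_0+0)=\gamma$ $(\mathrm{L})$, and the observation in Section~\ref{Prelim} (that $f(x_0+0)=\gamma$ $(\mathrm{L})$ iff $f_n(x_0+0)=\gamma$ $(\mathrm{L})$) gives the one-sided limit for $f$ without the explicit integration by parts; likewise, for~(a) the paper's Definition~1 is satisfied with $n=1$ and this same continuous bounded $f_1$.
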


Finally, we remark that Theorem \ref{Thm:Cor} below generalizes some of our Tauberian results from \cite{vindas-estradaT2008}.

\section{Preliminaries\label{Prelim}}

We explain in this section several  pointwise notions for distributions. There are several equivalent ways to introduce them. We start with the useful approach from \cite{CF}. Define the operator $\mu_{a}$ on locally integrable complex valued functions in $\mathbb{R}$ as
\begin{equation*}
\mu_{a}\left\{  f\left(  t\right)  ;x\right\}  =\frac{1}{x-a}\int_{a}%
^{x}f\left(  t\right)  \,\mathrm{d}t\,,\ \ x\neq a\,, \label{4.1}%
\end{equation*}
while the operator $\partial_{a}$ is the inverse of $\mu_{a},$%
\begin{equation*}
\partial_{a}\left(  g\right)  =\left(  \left(  x-a\right)  g\left(  x\right)
\right)  ^{\prime}\,. \label{4.2}%
\end{equation*}
Suppose first that $f_{0}=f$ is \emph{real.} Then if it is bounded near $x=a,$
we can define
\begin{equation*}
\overline{f_{0}}\left(  a\right)  =\limsup_{x\rightarrow a}f\left(  x\right)
\,,\;\;\;\;\;\;\underline{f_{0}}\left(  a\right)  =\liminf_{x\rightarrow
a}f\left(  x\right)  \,. \label{4.3}%
\end{equation*}
Then $f_{1}=\mu_{a}\left(  f\right)  $ will be likewise bounded near $x=a$ and
actually
\begin{equation*}
\underline{f_{0}}\left(  a\right)  \leq\underline{f_{1}}\left(  a\right)
\leq\overline{f_{1}}\left(  a\right)  \leq\overline{f_{0}}\left(  a\right)
\end{equation*}
and, in particular, if $f\left(  a\right)  =f_{0}\left(  a\right)  $ exists,
then $f_{1}\left(  a\right)  $ also exists and $f_{1}\left(  a\right)
=f_{0}\left(  a\right)  .\smallskip$

\begin{definition}
A distribution $f\in\mathcal{D}^{\prime}\left(  \mathbb{R}\right)  $ is called
distributionally bounded at $x=a$ if there exist $n\in\mathbb{N}$ and
$f_{n}\in\mathcal{D}^{\prime}\left(  \mathbb{R}\right)  ,$ continuous and
bounded in a pointed neighborhood $\left(  a-\varepsilon,a\right)  \cup\left(
a,a+\varepsilon\right)  $ of $a,$ such that $f=\partial_{a}^{n}f_{n}%
.\smallskip$
\end{definition}

If $f_{0}$ is distributionally bounded at $x=a,$ then there exists a
\emph{unique} distributionally bounded distribution near $x=a,$ $f_{1},$ with
$f_{0}=\partial_{a}f_{1}.$ Therefore, $\partial_{a}$ and $\mu_{a}$ are
isomorphisms of the space of distributionally bounded distributions near
$x=a.$ Given $f_{0}$ we can form a sequence of distributionally bounded
distributions $\{f_{n}\}_{n=-\infty}^{\infty}$ with $f_{n}=\partial_{a}%
f_{n+1}$ for each $n\in\mathbb{Z}.$

We say that $f$ has the distributional point value $\gamma$ in the sense of
\L ojasiewicz \cite{lojasiewicz,estrada-vindasIntegral} and write
\begin{equation*}
f\left(  a\right)  =\gamma\;\;\;\left(  \mathrm{L}\right)  \,, \label{4.7}%
\end{equation*}
if there exists $n\in\mathbb{N},$ the order of the point value, such that
$f_{n}$ is continuous near $x=a$ and $f_{n}\left(  a\right)  =\gamma.$ 

It can
be shown \cite{CF, estrada, lojasiewicz, p-s-v} that $f\left(  a\right)  =\gamma$
$\left(  \mathrm{L}\right)  $ if and only if
\begin{equation*}
\lim_{\varepsilon\rightarrow0}f\left(  a+\varepsilon x\right)  =\gamma\,,
\end{equation*}
distributionally, that is, if and only if
\begin{equation}
\lim_{\varepsilon\rightarrow0^{+}}\left\langle f\left(  a+\varepsilon x\right)
,\phi\left(  x\right)  \right\rangle =\gamma\int_{-\infty}^{\infty}\phi\left(
x\right)  \,\mathrm{d}x\,, \label{4.9}%
\end{equation}
for each $\phi\in\mathcal{D}\left(  \mathbb{R}\right)  .$ On the other hand,
if $f$ is distributionally bounded at $x=a$ then $\left\langle f\left(  a+\varepsilon x\right)
,\phi\left(  x\right)  \right\rangle $ is bounded as $\varepsilon
\rightarrow0.$

We can also consider distributional lateral limits \cite{lojasiewicz,vindas2007}. We say that the
distributional lateral limit $f\left(  a+0\right)  $ $\left(  \mathrm{L}%
\right)  $ as $x\rightarrow a$ from the right exists and equals $\gamma,$ and
write%
\begin{equation*}
f\left(  a+0\right)  =\gamma\;\;\;\left(  \mathrm{L}\right)  \,, \label{4.10}%
\end{equation*}
if (\ref{4.9}) holds for all $\phi\in\mathcal{D}\left(  \mathbb{R}\right)  $
with support contained in $(0,\infty).$ The distributional lateral limit from
the left $f\left(  a-0\right)  $ $\left(  \mathrm{L}\right)  $ is defined in a
similar fashion.

Observe also that if $f=\partial_{a}^{n}f_{n},$ and $f_{n}$ is bounded near
$x=a,$ then $f\left(  a+0\right)  $ $\left(  \mathrm{L}\right)  $ exists, and
equals $\gamma$, if and only if $f_{n}\left(a+0\right)  =\gamma$ $\left(
\mathrm{L}\right).$

These notions have straightforward extensions to distributions defined in a
smooth contour of the complex plane. A natural extension of this pointwise notions for distributions is the so called quasiasymptotic behavior of distributions, explained, e.g., in \cite{p-s-v,vindas2010, vladimirov-d-z}.

\section{Boundary values \label{tauberianPV} and distributional point values}

We shall need the following well known fact \cite{beltrami}. We shall use the
notation $\mathbb{H}$ for the half plane $\left\{  z\in\mathbb{C}:\Im
m\,z>0\right\}.$

\begin{lemma}
Let $F$ be analytic in the half plane $\mathbb{H},$ and suppose that the
distributional limit $f\left(  x\right)  =F\left(  x+i0\right)  $ exists in
$\mathcal{D}^{\prime}\left(  \mathbb{R}\right)  .$ Suppose that there exists
an open, non-empty interval $I$ such that $f$ is equal to the constant
$\gamma$ in $I.$ Then $f=\gamma$ and $F=\gamma.\smallskip$
\end{lemma}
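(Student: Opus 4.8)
The plan is to reduce to the case $\gamma=0$ and then to show that the hypothesis forces $F$ to continue analytically across the real interval $I$ as the zero function, after which the identity theorem for holomorphic functions finishes the argument. First I would replace $F$ by $F-\gamma$: this is again analytic in $\mathbb{H}$, its distributional boundary value is $f-\gamma$, and by assumption this boundary value vanishes on $I$. Thus it suffices to prove that an analytic $F$ on $\mathbb{H}$ whose distributional boundary value vanishes on $I$ must be identically zero.

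The key step is a reflection (edge-of-the-wedge) argument. Consider the function that equals $F$ on $\mathbb{H}$ and equals $0$ on the lower half-plane $\mathbb{H}^{-}=\left\{z\in\mathbb{C}:\Im m\,z<0\right\}$. The zero function on $\mathbb{H}^{-}$ trivially has distributional boundary value $0$ on $I$, while by hypothesis $F$ has distributional boundary value $0$ on $I$ from above. Since these two boundary values coincide on $I$, the distributional version of the edge-of-the-wedge theorem (the form guaranteed by the existence of the distributional boundary value, cf.\ \cite{beltrami}) provides a single function $G$, holomorphic on an open complex neighborhood $U$ of $I$, with $G=F$ on $U\cap\mathbb{H}$ and $G=0$ on $U\cap\mathbb{H}^{-}$.

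Now $G$ is holomorphic on the connected open set $U$ and vanishes on the nonempty open subset $U\cap\mathbb{H}^{-}$, so by the identity theorem $G\equiv 0$ on $U$; in particular $F=0$ on $U\cap\mathbb{H}$. Since $\mathbb{H}$ is connected and $F$ is holomorphic there and vanishes on the nonempty open set $U\cap\mathbb{H}$, a second application of the identity theorem yields $F\equiv 0$ on all of $\mathbb{H}$. Passing to boundary values gives $f=F(x+i0)=0$ in $\mathcal{D}^{\prime}(\mathbb{R})$. Undoing the normalization, $F\equiv\gamma$ and $f\equiv\gamma$, as claimed.

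The step I expect to be the main obstacle is the analytic continuation across $I$: one must use that the mere existence of the distributional limit $f=F(x+i0)$ forces $F$ to have at most polynomial (tempered) growth as $y\rightarrow 0^{+}$ on compact subsets of $I$, which is exactly the regularity needed to invoke the distributional reflection principle. Granting that growth estimate, which is standard in the Beltrami--Wood theory of analytic representations, the gluing and the two applications of the identity theorem are routine.
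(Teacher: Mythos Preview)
Your argument is correct. The reduction to $\gamma=0$, the reflection/edge-of-the-wedge step gluing $F$ on $\mathbb{H}$ with the zero function on $\mathbb{H}^{-}$ across $I$, and the two applications of the identity theorem are all sound; the polynomial-growth estimate you flag is indeed the standard ingredient that makes the distributional gluing go through.

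Note, however, that the paper does not actually prove this lemma: it merely states it as a well known fact with a reference to \cite{beltrami}. Your proof is precisely the classical analytic-representation argument one finds in that reference, so there is no substantive difference in approach to report.
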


Actually using the theorem of Privalov \cite[Cor 6.14]{Pom} it is easy to see
that if $F$ is analytic in the half plane $\mathbb{H},$ $f\left(  x\right)
=F\left(  x+i0\right)  $ exists in $\mathcal{D}^{\prime}\left(  \mathbb{R}%
\right)  ,$ and there exists a subset $X\subset\mathbb{R}$ of non-zero measure
such that the distributional point value $f\left(  x_0\right)  $ exists and
equals $\gamma$ if $x_0\in X,$ then $f=\gamma$ and $F=\gamma.$

Our first result is for \emph{bounded} analytic
functions.

\begin{theorem}
\label{TeoremaCaso1}Let $F$ be analytic and bounded in a rectangular region of
the form $\left(  a,b\right)  \times\left(  0,R\right)  .$ Set $f\left(
x\right)  =\lim_{y\rightarrow0^{+}}F\left(  x+iy\right)  $ in
$\mathcal{D}^{\prime}\left(  a,b\right)$, so that $f\in L^{\infty}(a,b)$.  Let $x_{0}\in\left(  a,b\right)  $ be
such that
\begin{equation}
f\left( x_0+0\right)  =\gamma\;\;\;\left(  \mathrm{L}\right) 
\label{Ta.1}%
\end{equation}
exists. Then the distributional point value
\begin{equation}
f\left(  x_0\right)  =\gamma\;\;\;\left(  \mathrm{L}\right)
\label{Ta.2}%
\end{equation}
also exists. In fact, the point value is of the first order, and thus
\begin{equation}
\lim_{x\rightarrow x_{0}}\frac{1}{x-x_{0}}\int_{x_{0}}^{x}f\left(  t\right)
\,\mathrm{d}t=\gamma\,. \label{Ta.3}%
\end{equation}
\end{theorem}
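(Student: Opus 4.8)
The plan is to reduce the statement to a normal-family argument for the dilates of $F$ and then to invoke the already-established Lemma 1. First I would normalize, translating so that $x_{0}=0$ and replacing $F$ by $F-\gamma$, so that the hypothesis \eqref{Ta.1} becomes $f(0+0)=0$ $(\mathrm{L})$ and the goal is the first-order point value $f(0)=0$ $(\mathrm{L})$, i.e. $\lim_{x\to0}\frac{1}{x}\int_{0}^{x}f=0$. For small $\varepsilon>0$ set $F_{\varepsilon}(z)=F(\varepsilon z)$. Since $F$ is bounded on $(a,b)\times(0,R)$, the functions $F_{\varepsilon}$ are uniformly bounded, and their common domain of definition exhausts the full upper half-plane $\mathbb{H}$ as $\varepsilon\to0^{+}$. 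Hence $\{F_{\varepsilon}\}$ is a normal family on $\mathbb{H}$, and every sequence $\varepsilon_{k}\to0^{+}$ admits a subsequence along which $F_{\varepsilon_{k}}\to F_{0}$ locally uniformly on $\mathbb{H}$, with $F_{0}$ analytic and bounded.

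The decisive point is to identify the distributional boundary value of the limit $F_{0}$. At the boundary we have $F_{\varepsilon}(x+i0)=f(\varepsilon x)$, and the hypothesis \eqref{Ta.1}, rewritten through \eqref{4.9}, says precisely that $f(\varepsilon\,\cdot)\to0$ in $\mathcal{D}^{\prime}(0,\infty)$ as $\varepsilon\to0^{+}$. Passing to a further subsequence and using that $f(\varepsilon\,\cdot)$ is bounded in $L^{\infty}$ near $0$ (hence weak-$*$ precompact), I would arrange $f(\varepsilon_{k}\,\cdot)\to h$ weak-$*$, with $h=0$ a.e.\ on $(0,\infty)$. The claim is that $h$ is the boundary value of $F_{0}$; granting this, $F_{0}$ is analytic and bounded in $\mathbb{H}$, its distributional limit $F_{0}(x+i0)=h$ exists in $\mathcal{D}^{\prime}(\mathbb{R})$, and it equals the constant $0$ on the open interval $(0,\infty)$. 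Lemma 1 then forces $F_{0}\equiv0$, and therefore $h=0$ a.e.\ on all of $\mathbb{R}$.

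Since every subsequential weak-$*$ limit of $f(\varepsilon\,\cdot)$ must be the zero function, the whole family converges, $f(\varepsilon\,\cdot)\to0$ weak-$*$ in $L^{\infty}$ as $\varepsilon\to0^{+}$; testing against an arbitrary $\phi\in\mathcal{D}(\mathbb{R})$ gives $f(0)=0$ $(\mathrm{L})$ in the sense of \eqref{4.9}. Because $f$ is bounded, this weak-$*$ convergence may also be tested against $\chi_{[0,1]}$ and $\chi_{[-1,0]}$, which is exactly the assertion that $\frac{1}{x}\int_{0}^{x}f\to0$ as $x\to0$ from both sides. Undoing the normalization yields \eqref{Ta.2} and \eqref{Ta.3}, with the point value of order one.

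I expect the main obstacle to be the boundary-value identification asserted in the second paragraph, namely the interchange of the two limits $y\to0^{+}$ and $\varepsilon\to0^{+}$. The cleanest route I foresee is to represent the bounded analytic $F_{\varepsilon}$ near the boundary by the Poisson integral of its boundary function, $F_{\varepsilon}(x+iy)=\int P_{y}(x-t)f(\varepsilon t)\,\mathrm{d}t$, valid after localizing to a half-disc contained in the rectangle, and then to let $\varepsilon_{k}\to0^{+}$: the weak-$*$ convergence $f(\varepsilon_{k}\,\cdot)\to h$ tested against the $L^{1}$ kernel $P_{y}(x-\cdot)$ gives $F_{0}(x+iy)=\int P_{y}(x-t)h(t)\,\mathrm{d}t$, so that indeed $h=F_{0}(x+i0)$. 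Making the localization precise and controlling the behaviour in the passage to the full half-plane, away from the fixed compact sets, is the technical heart of the argument.
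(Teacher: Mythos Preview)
Your approach is essentially the paper's: take weak* subsequential limits of the dilates $f_\varepsilon=f(x_0+\varepsilon\,\cdot)$ in $L^\infty$, show that any such limit is the boundary value of a bounded analytic function on $\mathbb{H}$ which agrees with the constant $\gamma$ on $(0,\infty)$, invoke Lemma~1, and finish by testing against $\chi_{[0,1]}$.

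The one genuine difference is how each argument certifies that the weak* limit lies in $H^\infty$. You run a normal-family argument on $F_\varepsilon$ and then try to match the analytic limit $F_0$ with the weak* limit $h$ via a local Poisson representation; as you correctly note, making this precise on a rectangle (rather than on the full half-plane) is the delicate step. The paper sidesteps this entirely: before doing anything, it chooses a symmetric curve $\mathsf{C}$ in $[a,b]\times[0,R)$ whose intersection with the real axis is an interval about $x_0$, and conformally maps the enclosed region onto $\mathbb{H}$, sending $x_0$ to $x_0$ and the real segment to $\mathbb{R}$. After this reduction $f\in H^\infty(\mathbb{R})$ globally, and since $H^\infty$ is weak* closed in $L^\infty(\mathbb{R})=(L^1)'$, every weak* limit of $\{f_\varepsilon\}$ automatically lies in $H^\infty$. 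No Poisson localization, no interchange of limits. Your route can be completed, but the conformal reduction buys a much cleaner proof of exactly the step you flagged as the obstacle.
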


\begin{proof}
We shall first show that it is enough to prove the result if the rectangular
region is the upper half-plane $\mathbb{H}.$ Indeed, let $\mathsf{C}$ be a smooth simple closed curve
contained in $\left(  a,b\right)  \times\lbrack0,R)$ such that $\mathsf{C}%
\cap\left(  a,b\right)  =[  x_{0}-\eta,x_{0}+\eta]  ,$ and which is
symmetric with respect to the line $\Re e\,z=x_{0}.$ Let $\varphi$ be a
conformal bijection from $\mathbb{H}$ to the region enclosed by $\mathsf{C}$
such that the image of the line $\Re e\,z=x_{0}$ is contained in $\Re
e\,z=x_{0},$ so that, in particular, $\varphi\left(  x_{0}\right)  =x_{0}.$
Then (\ref{Ta.1})--(\ref{Ta.3}) hold if and only if the
corresponding equations hold for
$f\circ\varphi$.

Therefore we may assume that $a=-\infty,$ and $b=R=\infty.$ In this case,
$f$ belongs to the Hardy space $H^{\infty},$ the closed subspace of $L^{\infty}\left(
\mathbb{R}\right)  $ consisting of the boundary values of bounded analytic
functions on $\mathbb{H}.$ Let $f_{\varepsilon}\left(  x\right)  =f\left(
x_{0}+\varepsilon x\right)  .$ Clearly, the set $\left\{  f_{\varepsilon
}:\varepsilon>0\right\}  $ is weak* bounded (as a subset of the dual space
$\left(  L^{1}\left(  \mathbb{R}\right)  \right)  ^{\prime}=L^{\infty}\left(
\mathbb{R}\right)  $) and, consequently, a relatively weak* compact set. If
$\left\{  \varepsilon_{n}\right\}  _{n=0}^{\infty}$ is a sequence of positive
numbers with $\varepsilon_{n}\rightarrow0$ such that the sequence $\left\{
f_{\varepsilon_{n}}\right\}  _{n=0}^{\infty}$ is weak* convergent to $g\in
L^{\infty}\left(  \mathbb{R}\right)  ,$ then $g\equiv\gamma,$ since $g\in
H^{\infty},$ and $g\left(  x\right)  =\gamma$ for $x>0.$ In fact, the condition (\ref{Ta.1}) means that 
$$
\int_{0}^{\infty}g(x)\psi(x) \mathrm{d}x = \lim_{n\to\infty} \int_{0}^{\infty}f_{\varepsilon_{n}}(x)\psi(x) \mathrm{d}x= \gamma \int_{0}^{\infty}\psi(x) \mathrm{d}x\ ,
$$
for all $\psi\in \mathcal{D}(0,\infty)$, which yields the claim. Since any sequence
$\left\{  f_{\varepsilon_{n}}\right\}  _{n=0}^{\infty}$ with $\varepsilon
_{n}\rightarrow0$ has a weak* convergent subsequence, and since that
subsequence converges to the constant function $\gamma,$ we conclude that
$f_{\varepsilon}\rightarrow\gamma$ in the weak* topology of $L^{\infty}\left(
\mathbb{R}\right)  .$ Furthermore, (\ref{Ta.3}) follows by taking $x=x_{0}+\varepsilon$ and
$\phi\left(  t\right)  =\chi_{\left[  0,1\right]  }\left(  t\right)  ,$ the
characteristic function of the unit interval, in the limit $\lim
_{\varepsilon\rightarrow0}\left\langle f_{\varepsilon}\left(  t\right)
,\phi\left(  t\right)  \right\rangle =\gamma\int_{-\infty}^{\infty}\phi\left(
t\right)  \,\mathrm{d}t.$ 
\end{proof}

We can now prove our main result, a distributional extension of Theorem \ref{TeoremaCaso1}.

\begin{theorem}
\label{TeoremaCaso2}Let $F$ be analytic in a rectangular region of the form
$\left(  a,b\right)  \times\left(  0,R\right)  .$ Suppose $f\left(  x\right)
=\lim_{y\rightarrow0^{+}}F\left(  x+iy\right)  $ in the space $\mathcal{D}%
^{\prime}\left(  a,b\right)  .$ Let $x_{0}\in\left(  a,b\right)  $ such that
$f\left(  x_{0}+0\right)  =\gamma$ $\left(  \mathrm{L}\right)  .$ If $f$ is
distributionally bounded at $x=x_{0}$ then $f\left(  x_{0}\right)  =\gamma$
$\left(  \mathrm{L}\right)  .$ Furthermore, $F(z)\to \gamma$ as $z\to x_{0}$ in an angular fashion.
\end{theorem}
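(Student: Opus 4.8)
The plan is to reduce to Theorem~\ref{TeoremaCaso1} by stripping off the distributional unboundedness through repeated integration, and then to recover the two conclusions for $F$ from those for the resulting bounded function. First I would perform the same conformal reduction as in the proof of Theorem~\ref{TeoremaCaso1}, so that it suffices to treat $x_{0}=0$ with $F$ analytic in $\mathbb{H}$ and $f=F(x+i0)$. Since $f$ is distributionally bounded at $0$, Section~\ref{Prelim} lets me write $f=\partial_{0}^{n}f_{n}$ with $f_{n}$ continuous and bounded near $0$, and the same reference shows that the hypothesis $f(0+0)=\gamma$ $(\mathrm{L})$ is equivalent to $f_{n}(0+0)=\gamma$ $(\mathrm{L})$. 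Thus the problem becomes an application of Theorem~\ref{TeoremaCaso1} to an analytic function whose boundary value is the bounded function $f_{n}$.

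To build that analytic function I would use the Euler operator $L=\frac{\mathrm{d}}{\mathrm{d}z}\,z$, which on boundary values realizes $\partial_{0}$, i.e. $(Lh)(x+i0)=\partial_{0}\bigl(h(x+i0)\bigr)$, and which commutes with dilations and fixes constants, $L\gamma=\gamma$. Setting $G_{n}=L^{-n}F$, obtained by iterating $h\mapsto\frac{1}{z}\int_{0}^{z}h(w)\,\mathrm{d}w$, I obtain an analytic function with $G_{n}(x+i0)=f_{n}$ and $F=L^{n}G_{n}$. Granting that $G_{n}$ is bounded near $0$, Theorem~\ref{TeoremaCaso1} yields $f_{n}(0)=\gamma$ $(\mathrm{L})$, whence $f=\partial_{0}^{n}f_{n}$ has the \L ojasiewicz point value $f(0)=\gamma$ as well. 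Moreover the argument in the proof of Theorem~\ref{TeoremaCaso1} gives that the dilations $G_{n}(\varepsilon z)$ converge to $\gamma$ locally uniformly on $\mathbb{H}$ (they are uniformly bounded, hence normal, and every locally uniform limit has boundary value $\gamma$, so equals $\gamma$ by the Lemma at the beginning of this section); applying the dilation-invariant operator $L^{n}$ and using $L^{n}\gamma=\gamma$, the dilations $F(\varepsilon z)=L^{n}\bigl(G_{n}(\varepsilon\,\cdot)\bigr)(z)$ then also converge to $\gamma$ locally uniformly, which is exactly the angular convergence $F(z)\to\gamma$ as $z\to0$.

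The main obstacle is the boundedness of $G_{n}$ near $0$, the step where distributional boundedness (rather than the mere existence of $f$) is used decisively. On a small rectangle $(-\eta,\eta)\times(0,h)$ the function $G_{n}$ is continuous and bounded on the top and the two vertical sides, and its boundary value on the bottom edge is $f_{n}\in L^{\infty}$; in addition $G_{n}$ inherits from $F$ an at most polynomial growth $|G_{n}(x+iy)|\le Cy^{-M}$, since passing to a distributional boundary value forces polynomial growth and the integrations defining $L^{-n}$ do not worsen it. A Phragm\'en--Lindel\"of type argument then combines the bounded boundary values on the bottom edge with this polynomial growth control to give boundedness of $G_{n}$ in the interior, placing us squarely in the hypotheses of Theorem~\ref{TeoremaCaso1}. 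I expect this growth-reduction to $H^{\infty}$ to be the delicate part; the compactness bookkeeping and the passage back to $F$ through the Euler operator are then routine.
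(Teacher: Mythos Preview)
Your reduction to Theorem~\ref{TeoremaCaso1} via the decomposition $f=\partial_{x_{0}}^{n}f_{n}$ and the construction of an analytic function with boundary value $f_{n}$ is precisely the paper's route: the paper simply posits the unique angularly bounded solution $F_{n}$ of $\partial_{x_{0}}^{n}F_{n}=F$, asserts that $f_{n}\in L^{\infty}$ forces $F_{n}$ to be bounded on a smaller rectangle, and then invokes Theorem~\ref{TeoremaCaso1}. One minor imprecision in your write-up: the explicit formula $\tfrac{1}{z}\int_{0}^{z}h(w)\,\mathrm{d}w$ need not make sense, since $F$ may blow up like $y^{-M}$ at the boundary and the integral from $0$ diverges; the correct object is the primitive of $F$ whose boundary distribution equals $(x-x_{0})f_{1}(x)$, then divided by $z-x_{0}$, which is exactly what the paper's phrase ``the only angularly bounded solution'' is encoding, and your Phragm\'en--Lindel\"of step then applies to that. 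For the angular limit the paper does not argue directly but just cites \cite{estrada2005,vindas2010} for the general implication ``\L ojasiewicz point value $\Rightarrow$ non-tangential limit''; your normal-families argument combined with the dilation-invariance of the Euler operator $L$ is a clean self-contained alternative that the paper does not give.
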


\begin{proof}
There exists $n\in\mathbb{N}$ and a function $f_{n}$ bounded in a neighborhood
of $x_{0}$ such that $f=\partial_{x_{0}}^{n}f_{n};$ notice that $f\left(
x_{0}\right)  =\gamma$ $\left(  \mathrm{L}\right)  $ if and only if
$f_{n}\left(  x_{0}\right)  =\gamma$ $\left(  \mathrm{L}\right)  .$ But
$f_{n}\left(  x\right)  =F_{n}\left(  x+i0\right)  $ distributionally, where
$F_{n}$ is analytic in $\left(  a,b\right)  \times\left(  0,R\right)  ;$ here
$F_{n}$ is the only angularly bounded solution of $F\left(  z\right)
=\partial_{x_{0}}^{n}F_{n}\left(  z\right)  $ (derivatives with respect to
$z$). Clearly, $f_{n}(x)=F_{n}(x+i0)$. Since $f_{n}$ is bounded near $x=x_{0},$ $F_{n}$ is also bounded in a
rectangular region of the form $\left(  a_{1},b_{1}\right)  \times\left(
0,R_{1}\right)  ,$ where $x_{0}\in\left(  a_{1},b_{1}\right)  .$ Clearly
$f_{n}\left(  x_{0}+0\right)  =\gamma$ $\left(  \mathrm{L}\right),$ so the
Theorem \ref{TeoremaCaso1} yields $f_{n}\left(  x_{0}\right)  =\gamma$
$\left(  \mathrm{L}\right)  ,$ as required. Finally, the fact that $F(z)\to\gamma$ as $z\to x_{0}$, angularly,   
is a consequence of the existence of the distributional point value, as shown in \cite{estrada2005,vindas2010}.\end{proof}

Observe that in general the result (\ref{Ta.3}) does not follow if $f$ is not
bounded but just distributionally bounded near $x_{0}.$ 

We may use a conformal map to obtain the following general form of the Theorem
\ref{TeoremaCaso2}.

\begin{theorem}
\label{TeoremaCaso3}Let $\mathsf{C}$ be a smooth part of the boundary
$\partial\Omega$ of a region $\Omega$ of the complex plane. Let $F$ be
analytic in $\Omega,$ and suppose that $f\in\mathcal{D}^{\prime}\left(
\mathsf{C}\right)  $ is the distributional boundary limit of $F.$ Let $\xi
_{0}\in\mathsf{C}$ and suppose that the distributional lateral limit $f\left(
\xi_{0}+0\right)  =\gamma$ $\left(  \mathrm{L}\right)  $ exists and $f$ is
distributionally bounded at $\xi=\xi_{0},$ then $f\left(  \xi_{0}\right)
=\gamma$ $\left(  \mathrm{L}\right)  $ and $F(z)$ has non-tangential limit $\gamma$ at the boundary point $\xi_{0}$.
\end{theorem}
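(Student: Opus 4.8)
The plan is to reduce the statement to Theorem \ref{TeoremaCaso2} by a local conformal change of variables that straightens the boundary arc $\mathsf{C}$ near $\xi_0$, exactly in the spirit of the opening reduction in the proof of Theorem \ref{TeoremaCaso1}. Since $\mathsf{C}$ is a smooth part of $\partial\Omega$, I would first pass to a small neighborhood $U$ of $\xi_0$ in which $\mathsf{C}\cap U$ is a smooth simple arc and $\Omega\cap U$ lies entirely on one side of it. All the hypotheses and the desired conclusion are local at $\xi_0$, so this costs no generality.

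Next I would produce a conformal bijection $\varphi$ from a rectangular region $Q=(a,b)\times(0,R_1)$ onto a subdomain of $\Omega\cap U$ adjacent to $\mathsf{C}$, arranged so that $\varphi$ carries the segment $(a,b)$ onto $\mathsf{C}$ near $\xi_0$, preserves the orientation of the arc, and satisfies $\varphi(x_0)=\xi_0$ for some $x_0\in(a,b)$. The crucial regularity input is the boundary behavior of conformal maps onto domains with smooth boundary: by the Kellogg--Warschawski regularity theory \cite{Pom}, $\varphi$ extends to a smooth diffeomorphism of a neighborhood of $x_0$ in $\overline{Q}$ onto a neighborhood of $\xi_0$ in $\overline{\Omega}$, with $\varphi'(x_0)\neq 0$. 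This is precisely the regularity needed for the pullback to respect the pointwise distributional notions of Section \ref{Prelim}.

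Then I would set $G=F\circ\varphi$, analytic in $Q$, and check that its distributional boundary value is $g=f\circ\varphi\in\mathcal{D}'(a,b)$; this is legitimate because $\varphi$ is smooth up to the boundary, so the distributionally convergent family $F\circ\varphi(\,\cdot+iy)$ still converges to $f\circ\varphi$. Since $\varphi$ is an orientation-preserving smooth diffeomorphism near $x_0$ with nonvanishing derivative, the three relevant notions transfer verbatim: the one-sided \L ojasiewicz limit $g(x_0+0)=\gamma$ $(\mathrm{L})$ is equivalent to $f(\xi_0+0)=\gamma$ $(\mathrm{L})$, and distributional boundedness of $g$ at $x_0$ is equivalent to distributional boundedness of $f$ at $\xi_0$. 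Applying Theorem \ref{TeoremaCaso2} to $G$, $g$, and $x_0$, I obtain $g(x_0)=\gamma$ $(\mathrm{L})$ together with $G(z)\to\gamma$ angularly as $z\to x_0$.

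Finally I would transfer these conclusions back through $\varphi$. Invariance of the \L ojasiewicz point value under the smooth change of variables gives $f(\xi_0)=\gamma$ $(\mathrm{L})$. For the boundary limit, since $\varphi$ is conformal and smooth up to the boundary with $\varphi'(x_0)\neq 0$, it is isogonal at $x_0$ and maps Stolz angles at $x_0$ into regions contained in Stolz angles at $\xi_0$; hence the angular limit $G(z)\to\gamma$ yields the non-tangential limit $F(w)\to\gamma$ as $w\to\xi_0$. The main obstacle is exactly this boundary-regularity bookkeeping: one must guarantee that the conformal map is smooth and conformal up to the arc so that both the distributional pointwise notions and the non-tangential approach regions are faithfully preserved. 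Once the Kellogg--Warschawski regularity is in hand, the remainder is a routine transfer of the already-established Theorem \ref{TeoremaCaso2}.
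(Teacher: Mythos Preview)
Your proposal is correct and follows the same approach as the paper, which simply states that the result follows from Theorem \ref{TeoremaCaso2} via a conformal map. You have supplied the boundary-regularity details (Kellogg--Warschawski, preservation of \L ojasiewicz notions and Stolz angles) that the paper leaves implicit.
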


We also immediately obtain the following Tauberian theorem. As mentioned at the Introduction, it generalizes some Tauberian results by the authors from \cite{vindas-estradaT2008}.

\begin{theorem}
\label{Thm:Cor}Let $F$ be analytic in a rectangular region of the form
$\left(  a,b\right)  \times\left(  0,R\right)  .$ Suppose $f\left(  x\right)
=\lim_{y\rightarrow0^{+}}F\left(  x+iy\right)  $ in the space $\mathcal{D}%
^{\prime}\left(  a,b\right)  .$ Let $x_{0}\in\left(  a,b\right)  $ such that
the distributional limit $\lim_{y\rightarrow0^{+}}F\left(  x_{0}+iy\right)
=\gamma$ $\left(  \mathrm{L}\right)  $ exists. If $f$ is distributionally
bounded at $x=x_{0}$ then $f\left(  x_{0}\right)  =\gamma$ $\left(
\mathrm{L}\right)  $ and the angular (ordinary) limit exists: $\lim_{z\to x_{0}%
}F\left( z\right)  =\gamma.$
\end{theorem}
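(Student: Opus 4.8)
The plan is to follow the scheme of Theorems \ref{TeoremaCaso1} and \ref{TeoremaCaso2}, replacing the lateral-limit hypothesis by the radial one. As in the proof of Theorem \ref{TeoremaCaso1}, a conformal change of variables that fixes $x_{0}$ and carries the line $\Re e\,z=x_{0}$ into itself reduces the problem to the upper half-plane $\mathbb{H}$; such a map sends the vertical approach to $x_{0}$ to the vertical approach and angular regions to angular regions (up to a smooth reparametrization with nonzero derivative at the origin), so both the radial distributional limit $\lim_{y\to 0^{+}}F(x_{0}+iy)=\gamma$ $(\mathrm{L})$ and the angular limit in the conclusion are preserved. We may thus assume $F$ is analytic on $\mathbb{H}$. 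Next, since $f$ is distributionally bounded at $x_{0}$, I would write $f=\partial_{x_{0}}^{n}f_{n}$ and $F=\partial_{x_{0}}^{n}F_{n}$ with $F_{n}$ the angularly bounded solution, $F_{n}$ bounded near $x_{0}$, and $f_{n}=F_{n}(x+i0)$, exactly as in Theorem \ref{TeoremaCaso2}.

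The key point is that the radial hypothesis transfers from $F$ to $F_{n}$ by the same device used for the lateral hypothesis in Theorem \ref{TeoremaCaso2}. Put $h(y)=F(x_{0}+iy)$ and $h_{n}(y)=F_{n}(x_{0}+iy)$. Restricting $\partial_{x_{0}}G(z)=((z-x_{0})G(z))^{\prime}$ to $z=x_{0}+iy$ and using $\tfrac{d}{dy}G(x_{0}+iy)=iG^{\prime}(x_{0}+iy)$, one computes that along the imaginary axis $\partial_{x_{0}}$ becomes the operator $g\mapsto (y\,g(y))^{\prime}=\partial_{0}g$ in the variable $y$. Hence $h=\partial_{0}^{n}h_{n}$ with $h_{n}$ bounded near $y=0$, and by the structural remark of Section \ref{Prelim} the one-sided limit $h(0+0)=\gamma$ $(\mathrm{L})$ holds if and only if $h_{n}(0+0)=\gamma$ $(\mathrm{L})$. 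Thus $\lim_{y\to 0^{+}}F_{n}(x_{0}+iy)=\gamma$ $(\mathrm{L})$, and it suffices to prove the theorem in the bounded case.

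For the bounded case I would argue as in Theorem \ref{TeoremaCaso1}. The dilations $f_{\varepsilon}(x)=f(x_{0}+\varepsilon x)$ form a weak* relatively compact subset of $L^{\infty}(\mathbb{R})=(L^{1})^{\prime}$; let $g\in H^{\infty}$ be the weak* limit of a subsequence $f_{\varepsilon_{k}}$ (recall $H^{\infty}$ is weak* closed). Representing $F_{\varepsilon_{k}}(z)=F(x_{0}+\varepsilon_{k}z)$ as the Poisson integral of its boundary function $f_{\varepsilon_{k}}$ and pairing against the kernel $P_{z}\in L^{1}(\mathbb{R})$, weak* convergence yields $F_{\varepsilon_{k}}(z)\to G(z)$ for every $z\in\mathbb{H}$, where $G$ is the bounded analytic Poisson extension of $g$. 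Evaluating on the imaginary axis gives $F(x_{0}+i\varepsilon_{k}y)\to G(iy)$; pairing with $\phi\in\mathcal{D}(0,\infty)$ and passing to the limit by dominated convergence, the radial hypothesis forces $\int_{0}^{\infty}G(iy)\phi(y)\,\mathrm{d}y=\gamma\int_{0}^{\infty}\phi(y)\,\mathrm{d}y$, so $G(iy)=\gamma$ for all $y>0$. Since $G-\gamma$ is analytic on $\mathbb{H}$ and vanishes on the positive imaginary axis, the identity theorem gives $G\equiv\gamma$, whence $g\equiv\gamma$. As every weak* limit point of $\{f_{\varepsilon}\}$ equals $\gamma$, we get $f_{\varepsilon}\to\gamma$ weak*, that is $f(x_{0})=\gamma$ $(\mathrm{L})$ of the first order; transferring back through $\partial_{x_{0}}^{n}$ gives $f(x_{0})=\gamma$ $(\mathrm{L})$ in general, and the angular limit of $F$ follows from the existence of the point value as in \cite{estrada2005,vindas2010}.

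The step I expect to require the most care is the passage from weak* convergence of the boundary dilations to pointwise convergence of the analytic extensions along the imaginary axis. In Theorem \ref{TeoremaCaso1} the lateral hypothesis constrains the weak* limit $g$ directly on the boundary, and the uniqueness lemma of Section \ref{tauberianPV} finishes the argument; here the radial hypothesis is an \emph{interior} statement, so one genuinely needs the Poisson representation (or, alternatively, a normal-families argument via Montel's theorem together with identification of the boundary value) to connect interior and boundary before the identity theorem can be invoked.
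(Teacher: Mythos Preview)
Your argument is correct, but it takes a genuinely different route from the paper's. The paper does not re-open the compactness argument at all: it simply observes that the curve $\mathsf{C}=(a,x_{0}]\cup[x_{0},iR)$ is part of the boundary of the left half of the rectangle, that the radial hypothesis $\lim_{y\to0^{+}}F(x_{0}+iy)=\gamma$ $(\mathrm{L})$ is precisely the lateral limit \emph{from the right along} $\mathsf{C}$ at the corner point $x_{0}$, and then invokes Theorem~\ref{TeoremaCaso3} to conclude that the lateral limit from the left along $\mathsf{C}$, which is nothing but $f(x_{0}-0)$, also equals $\gamma$; Theorem~\ref{TeoremaCaso2} then finishes. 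So the paper reduces everything to its earlier theorems via a three-line contour trick.

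Your route is more self-contained: you transfer the radial hypothesis through $\partial_{x_{0}}^{n}$ via the neat identity $(\partial_{x_{0}}G)(x_{0}+iy)=\partial_{0}\bigl[G(x_{0}+i\,\cdot\,)\bigr](y)$, and in the bounded case you use the Poisson representation to upgrade weak* convergence of the boundary dilations to pointwise interior convergence, where the radial hypothesis together with the identity theorem pins down the limit. The paper's argument is shorter but applies Theorem~\ref{TeoremaCaso3} at a \emph{corner} of $\mathsf{C}$, where the straightening conformal map is not smooth and some implicit care is needed; your argument sidesteps that issue entirely and makes the Abelian step (Poisson kernel in $L^{1}$) explicit, which is cleaner and more portable to settings where no convenient contour is available. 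One small point of order: after your first conformal reduction and the passage to $F_{n}$, you only know $F_{n}$ is bounded on a rectangle near $x_{0}$, not on all of $\mathbb{H}$, so to run the weak* argument in $L^{\infty}(\mathbb{R})$ you should perform the conformal reduction of Theorem~\ref{TeoremaCaso1} once more for $F_{n}$; your phrase ``as in Theorem~\ref{TeoremaCaso1}'' presumably intends this.
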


\begin{proof}
If we consider the curve $\mathsf{C}$ to be the union of the segments
$(a,x_{0}]$ and $[x_{0},iR),$ then the distributional lateral limit of the
boundary value of $F$ on $\mathsf{C}$ exists and equals $\gamma$ as we
approach $x_{0}$ from the right along $\mathsf{C}$ and so the Theorem
\ref{TeoremaCaso3}\ yields that the distributional limit from the left, which
is nothing but $f\left(  x_{0}-0\right)  $ $\left(  \text{L}\right)  ,$ also
exists and equals $\gamma.$ Then the Theorem \ref{TeoremaCaso2} gives us that
$f\left(  x_{0}\right)  =\gamma$ $\left(  \mathrm{L}\right)  .$ The existence
of the angular limit of $F\left(  z\right)  $ as $z\rightarrow x_{0}$ then
follows.
\end{proof}


\begin{thebibliography}{99}                                                                                               %


\bibitem {beltrami} E. J. Beltrami, M. R. Wohlers,
\textit{Distributions and the boundary values of analytic functions,} Academic
Press, New York, 1966.

\bibitem {bremermann} H. Bremermann, \textit{Distributions, complex
variables and Fourier transforms,} Addison-Wesley, Reading, Massachusetts, 1965.

\bibitem {CF} J. Campos Ferreira, \textit{Introduction to the theory
of distributions,} Longman, London, 1997.

\bibitem{c-k-p} R. D. Carmichael, A. Kami\'{n}ski, S. Pilipovi\'{c}, \emph{Boundary values and convolution in ultradistribution spaces,} Series on Analysis, Applications and Computation, 1, World Scientific Publishing Co. Pte. Ltd., Hackensack, NJ, 2007.

\bibitem {EstradaChina} R. Estrada, \textit{Boundary values of analytic
functions without distributional point values,} Tamkang J. Math.
\textbf{35 }(2004), 53--60.

\bibitem{estrada2005} R. Estrada, \emph{A distributional version of the Ferenc Luk\'{a}cs theorem,} Sarajevo
J. Math. \textbf{1} (2005), 75--92.

\bibitem {EstradaComVar} R. Estrada, \textit{One-sided cluster sets of
distributional boundary values of analytic functions,} Complex Var. and
Elliptic Eqns. \textbf{51 }(2006), 661--673.

\bibitem {estrada} R. Estrada, R. P. Kanwal, \textit{ A
distributional approach to asymptotics: Theory and applications,}
Birkh\"{a}user, Boston, 2002
 

\bibitem{estrada-vindasT2010}R. Estrada, J. Vindas, \emph{On the point behavior of Fourier series and conjugate series,} Z. Anal. Anwend. \textbf{29} (2010), 487--504.

\bibitem{estrada-vindasIntegral} R. Estrada, J. Vindas, \emph{A general integral,} Dissertationes Math. \textbf{483} (2012), 49 pp.

\bibitem {lojasiewicz}S. \L ojasiewicz, \textit{Sur la valuer et la limite
d'une distribution en un point,} Studia Math. \textbf{16 }(1957), 1--36.

\bibitem {ML} O. P. Misra, J. Lavoine, \textit{Transform analysis
of generalized functions, }North-Holland, Amsterdam, 1986.

\bibitem{koosis} P. Koosis, \emph{Introduction to $H^{p}$ spaces,} London Mathematical Society Lecture Note
Series, 40, Cambridge University Press, Cambridge-New York, 1980.

\bibitem{p-s-v} S. Pilipovi\'{c}, B. Stankovi\' {c}, J. Vindas, \emph{Asymptotic behavior of generalized functions,} Series on Analysis, Applications and Computations, 5, World Scientific Publishing Co. Pte. Ltd., Hackensack, NJ, 2012.

\bibitem {Pom} Ch. Pommerenke, \textit{Boundary behaviour of
conformal maps, }Springer Verlag, Berlin, 1992.

\bibitem{vindas2010}J. Vindas, \textit{The structure of quasiasymptotics of Schwartz distributions,} Banach Center Publ. \textbf{88} (2010), 297--314.

\bibitem{vindas2007} J. Vindas, R. Estrada, \emph{Distributionally regulated functions,} Studia Math. \textbf{181} (2007), 211--236.

 
\bibitem {vindas-estradaT2008} J. Vindas, R. Estrada, \emph{A Tauberian theorem for distributional point values,} Arch. Math. (Basel) \textbf{91} (2008), 247--253.

\bibitem{vindas-estrada2008}J. Vindas, R. Estrada, \emph{On the jump behavior of distributions and logarithmic averages,} J. Math. Anal. Appl. \textbf{347} (2008), 597--606.

\bibitem {vladimirov-d-z} V. S. Vladimirov, Yu. N Drozhzhinov, B. I. Zavialov, \textit{ Tauberian theorems for generalized functions,} Kluwer
Academic Publishers, Dordrecht, 1988.


\end{thebibliography}
\end{document}